
\documentclass[11pt]{amsart}

\usepackage{tikz}
\usetikzlibrary{matrix}
\usetikzlibrary{patterns, shapes}
\usetikzlibrary{arrows}
\usetikzlibrary{calc,3d}
\usetikzlibrary{decorations,decorations.pathmorphing, decorations.pathreplacing}
\usetikzlibrary{through}
\tikzset{ext/.style={circle, draw,inner sep=1pt},int/.style={circle,draw,fill,inner sep=1pt},nil/.style={inner sep=1pt}}
\tikzset{exte/.style={circle, draw,inner sep=3pt},inte/.style={circle,draw,fill,inner sep=3pt}}
\tikzset{diagram/.style={matrix of math nodes, row sep=3em, column sep=2.5em, text height=1.5ex, text depth=0.25ex}}
\tikzset{diagram2/.style={matrix of math nodes, row sep=0.5em, column sep=0.5em, text height=1.5ex, text depth=0.25ex}}

\usepackage{tikz-cd}

\usepackage{amssymb, amsmath, amscd, amsthm, graphicx, psfrag,rotating}
\usepackage{enumerate}
\usepackage[matrix,arrow,curve]{xy}
\xyoption{dvips}              
\usepackage{wasysym}
\usepackage{epstopdf}

\addtolength{\textwidth}{1.4in}
\oddsidemargin=0in

\evensidemargin=0in

\usepackage{amssymb, amsmath, amscd, amsthm,
color, epsfig
}
\usepackage[english]{babel}

\title[Mapping spaces of Swiss cheese operads]{Mapping spaces
of Swiss cheese operads}
\author{Victor Turchin}
\address{Kansas State University}
\email{turchin@ksu.edu}

\thanks{The author was partially supported by the Simons Foundation award  \#933026.}

\newtheorem{theorem}{Theorem}[section]

\newtheorem{lemma}[theorem]{Lemma}
\newtheorem{proposition}[theorem]{Proposition}

\newtheorem{mainthm}{Theorem}

\theoremstyle{definition}
\newtheorem{definition}[theorem]{Definition}
\newtheorem{remark}[theorem]{Remark}
\newtheorem{example}[theorem]{Example}

\newcommand{\Q}{{\mathbb{Q}}}

\newcommand{\R}{{\mathbb R}}

\newcommand{\SC}{{\mathcal{SC}}}
\newcommand{\E}{{\mathcal E}}
\newcommand{\F}{{\mathcal F}}
\newcommand{\FS}{{\mathcal{FS}}}

\newcommand{\TT}{{\mathrm{T}}}
\newcommand{\MM}{{\mathrm{M}}}

\newcommand{\Ebarmn}{{\overline{\mathrm{Emb}}}_\partial(D^m,D^n)}
\newcommand{\Embmn}{\mathrm{Emb}_\partial(D^m,D^n)}

\newcommand{\Ebar}{\overline{\mathrm{Emb}}}
\newcommand{\Emb}{\mathrm{Emb}}

\newcommand{\Map}{\mathrm{Map}}

\newcommand{\Dbar}{\overline{\mathrm{Diff}}}

\newcommand{\Op}{\mathrm{Op}}
\newcommand{\Aut}{\mathrm{Aut}}

\newcommand{\VV}{\mathrm{V}}

\newcommand{\hofiber}{\mathrm{hofiber}}

\newcommand{\calP}{\mathcal{P}}
\newcommand{\calF}{\mathcal{F}}
\newcommand{\calQ}{\mathcal{Q}}

\numberwithin{equation}{section}

\begin{document}

\sloppy

\begin{abstract}
We show that the color restriction map $\Op^h(\SC_m,\SC_n)\to \Op^h(\E_{m-1},\E_{n-1})$
from the derived mapping space of Swiss cheese operads to that of little discs operads, is
a weak homotopy equivalence. We explain how this can help in the study of disc concordance embedding spaces.

\end{abstract}

\maketitle

\section{Introduction}\label{s:intro}
The manifold functor calculus was invented by T.~Goodwillie and M.~Weiss in order to study embedding spaces~\cite{GW,Weiss}. In its heart this approach replaces embedding spaces by induced maps of configuration spaces.
In its modern formulations the calculus approximations are described in terms of module maps over the little discs operad~\cite{AT1,BW_man_calc,KrKu2,Turchin}.
The approach can be generalized to spaces of {\it proper} embeddings (that send boundary to boundary), see~\cite[Section~9]{BW_man_calc}, for which instead of modules over the little discs operad one would need to use modules over the Swiss cheese operad. In other words,  one would need to look at the configuration spaces of points both in the interior and in the boundary of the source and target manifolds, whose interaction is encoded
by the Swiss Cheese operad. The main result of the paper is not a generalization of any previous  result, but rather a precursor and a key for  such generalizations, for example, for all findings from~\cite{AT1,BW_conf_cat,DT,FTW,Weiss1}
as it is outlined below.

The space $\Embmn$, $n\geq m$, of smooth disc embeddings relative to the boundary and their modulo immersion version
\[
\Ebarmn:=\hofiber\left(\Embmn\to\Omega^m\VV_{n,m}\right)
\]
are of particular importance as they encode how any general embedding can be deformed locally. These spaces were topic of active study in recent years. In the 
context of the Goodwillie-Weiss calculus~\cite{GW,Weiss}, it was shown that
\begin{gather}
\Ebarmn\simeq_{n-m\geq 3}\TT_\infty\Ebarmn\simeq\Omega^{m+1}\Op^h(\E_m,\E_n);\label{eq:deloop1}\\
\TT_k\Ebarmn\simeq\Omega^{m+1}\TT_k\Op^h(\E_m,\E_n),\,\,\,\, k\geq 1;\label{eq:deloop2}
\end{gather}
where $\TT_k\Ebarmn$ denotes the $k$th Goodwillie-Weiss approximation to $\Ebarmn$; $\TT_\infty\Ebarmn$
is the limit of the tower $\TT_\bullet\Ebarmn$; $\Op^h(-,-)$ and $\TT_k\Op^h(-,-)$ denote the derived mapping spaces
of operads and $k$-truncated operads, respectively; see~\cite{AT1,BW_conf_cat,DT,Weiss1}. One can ask if similar results
hold for the spaces $\Emb_{\partial_+}(D^m,D^n)$, $n\geq m$, of disc concordance embeddings i.e.,
smooth embeddings $f\colon D^m\hookrightarrow D^n$, such that $f^{-1}(\partial D^n)=\partial D^m$ and which
coincide with the standard equatorial inclusion on a fixed half of the boundary $D^{m-1}_+\subset S^{m-1}=\partial
D^m$. Note that these spaces encode how  any proper embedding $M\hookrightarrow N$ can be deformed near a boundary point. It is natural to expect that the modulo immersion version $\Ebar_{\partial_+}(D^m,D^n)$
of this space is related in the same way to the derived mapping space  $\Op^h(\SC_m,\SC_n)$ of Swiss
cheese operads. However, the relation is not so straightforward. One has natural color restriction maps
\begin{gather}
 \Op^h(\E_{m-1},\E_{n-1})\xleftarrow{\,\simeq\,} \Op^h(\SC_m,\SC_n)\xrightarrow{R_\infty}\Op^h(\E_m,\E_n);
\label{eq:zig1}\\
\TT_k \Op^h(\E_{m-1},\E_{n-1})\xleftarrow{\,\simeq\,} \TT_k\Op^h(\SC_m,\SC_n)\xrightarrow{R_k}\TT_k\Op^h(\E_m,\E_n), \,\,\,\, k\geq 1.
\label{eq:zig2}
\end{gather}
In this paper we show that the left maps in~\eqref{eq:zig1} and~\eqref{eq:zig2} are weak equivalences, while 
we conjecture that
\begin{equation}\label{eq:conj}
\TT_k\Ebar_{\partial_+}(D^m,D^n)\simeq \Omega^m\hofiber(R_k),\,\,\,\, 1\leq k\leq \infty.
\end{equation}
Note that $\hofiber(R_\infty)$ can also be thought of as the derived space $\Op^h(\SC_m,\SC_n\,\,\mathrm{mod}\,\,\E_m)$
of operad maps $\SC_m\to\SC_n$ that coincide with the standard map $\E_m\to\E_n$ when restricted on the second color.

Looking at the same problem slightly differently, one has a fiber sequence
\begin{equation}\label{eq:fib_seq}
\Ebarmn\to \Ebar_{ \partial_+}(D^m,D^n)\to \Ebar_\partial(D^{m-1},D^{n-1}).
\end{equation}
Its extension to the left is the so-called {\it graphing map}
\[
\Omega \Ebar_\partial(D^{m-1},D^{n-1})\to \Ebarmn,
\]
whose $(m+1)$st delooping  in case $n-m\geq 3$, is expected to be a map
\[
\Op^h(\E_{m-1},\E_{n-1}) \to \Op^h(\E_{m},\E_{n}),
\]
 induced by the Boardman-Vogt tensor product with $\E_1$. 
 We conjecture that the zigzag~\eqref{eq:zig1} models this map.

The zigzags \eqref{eq:zig1} and \eqref{eq:zig2} have their rational versions in which $\E_{n-1}$, $\E_n$,
$\SC_n$ are replaced by rationalization  $\E_{n-1}^\Q$, $\E_n^\Q$, $\SC_n^\Q$, see Remark~\ref{r:sc}.
We believe that the methods in~\cite{FTW} (where the rational homotopy type of $\Ebarmn$ and $\Embmn$, $n-m\geq 3$, is computed) together with explicit rational models of $\SC_m$ and  $\SC_n$ \cite{LW,Will} (see also~\cite{IdVas,Liv}) can similarly be used to understand the rational homotopy type of 
$\Ebar_{\partial_+}(D^m,D^n)$ and $\Emb_{\partial_+}(D^m,D^n)$.

The equivalences in~\eqref{eq:zig1}-\eqref{eq:zig2} and conjecture~\eqref{eq:conj} are of particular interest when $n=m$. In this case we get that the color restriction map $\Aut^h(\SC_n)\xrightarrow{\simeq}\Aut^h(\E_{n-1})$
is an equivalence, while we conjecture that the limit of the Goodwillie-Weiss tower for the group of disc pseudoisotopies  $\Dbar_{\partial_+}(D^n):=\Ebar_{\partial_+}(D^n,D^n)$ deloops as
  $$
\TT_\infty\Dbar_{\partial_+}(D_n)\simeq \Omega^{n}\Aut^h(\SC_n\,{\mathrm{mod}}\, \E_n),
$$
 where $\Aut^h(\SC_n\,{\mathrm{mod}}\, \E_n)$ is the group of homotopy automorphisms of $\SC_n$ preserving its second-color part $\E_n$. The relation between $\Dbar_\partial(D_n)$ and $\TT_\infty\Dbar_\partial(D^n)\simeq\Omega^{n+1}\Aut^h(\E_n)$ was studied by Horel, Krannich, Kupers, and Randal-Williams~\cite{HKK,KrKu,KraRand,KupRand,Randal}. The theory of diffeomorphism groups of odd-dimensional discs is substantially more difficult than that of even-dimensional discs~\cite{KraRand,KupRand,Randal}.  The group $\Dbar_{\partial_+}(D^n)$ of pseudoisotopies provides an important connection between the two thanks to the fiber sequence~\eqref{eq:fib_seq}~\cite{Randal}.

\subsection{Main theorem}\label{ss:main_thm}
In the paper we prove the following result.

\begin{mainthm}\label{thm}
For any $m\geq 1$ and any appropriate reduced two-colored symmetric topological operad~$\calP$, the color restriction maps
\begin{gather}
\TT_k\Op^h(\SC_m,\calP)\xrightarrow{\,\simeq\,} \TT_k\Op^h(\E_{m-1},\calP_1), \,\,\,\, k\geq 1;\label{eq:th1}\\
\Op^h(\SC_m,\calP)\xrightarrow{\,\simeq\,} \Op^h(\E_{m-1},\calP_1) \label{eq:th2}
\end{gather}
are weak equivalences.
\end{mainthm}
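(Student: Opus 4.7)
I would model both sides of~\eqref{eq:th1} using a common combinatorial description and exhibit the color restriction as induced by a (homotopy) cofinal inclusion of indexing categories. Following the tree/dendroidal models for derived mapping spaces of operads from~\cite{AT1,BW_conf_cat,DT,Weiss1}, $\TT_k\Op^h(\SC_m,\calP)$ admits a description as the totalization of a (semi-)cosimplicial diagram indexed by rooted trees with at most $k$ leaves and edges labeled by the two Swiss cheese colors, subject to the admissibility constraint that once an edge is of the second (interior) color, all edges above it (toward the leaves) must be of the second color as well. The values of the diagram are built from the appropriate operation spaces of $\calP$. In this description, the subcategory of monochromatic trees of first color recovers the analogous model for $\TT_k\Op(\E_{m-1},\calP_1)$, and the color restriction map is induced by this inclusion.

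The central step is to show that for any fixed first-color data $\E_{m-1}\to\calP_1$, the space of compatible Swiss cheese extensions to $\SC_m\to\calP$ is contractible. The geometric intuition is that admissibility forces every maximal second-color subtree to hang off the first-color skeleton near the root, mirroring the fact that in a Swiss cheese configuration the interior little discs can be shrunk toward the boundary. The ``appropriateness'' hypothesis on $\calP$ should be exactly what is needed for this shrinking to induce a genuine deformation retraction at the level of the value diagrams, presumably via a fibrancy-type condition on matching maps in the second-color strata.

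To make this precise I would proceed by induction on the filtration of the cosimplicial object by the total number of interior leaves. Each filtration step corresponds to attaching a second-color (or mixed) operation, and the appropriateness hypothesis should render the corresponding relative fiber contractible, yielding the induction step. The main obstacle, and main technical input, is verifying this contractibility at each stage; this requires a careful analysis of the second-color matching data of $\SC_m$ together with the compatible structure on $\calP$, and is the heart of the theorem. Once~\eqref{eq:th1} is established for all finite $k$, the unbounded statement~\eqref{eq:th2} follows by passing to the Milnor-type inverse limit along the Goodwillie--Weiss tower, with which both sides are manifestly compatible.
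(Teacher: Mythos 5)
Your high-level strategy is genuinely parallel to the paper's: the paper also works with a cofibrant (Fulton--MacPherson) model of $\SC_m$, inducts on the truncation level $k$, and within each arity $k$ filters by the number of second-color inputs (the intermediate truncations $\TT_{k-1,\ell}\Op$), showing that each filtration step is a fibration with contractible fibers; the untruncated statement then follows by passing to the limit of the tower. So the skeleton of your plan is sound. However, you have explicitly deferred the one step that actually constitutes the proof --- the contractibility of each relative fiber --- and this is a genuine gap, not a routine verification. Two distinct mechanisms are needed, and your proposal identifies neither. For the mixed-arity stages ($i$ second-color inputs with $1\leq i\leq k-1$, so that first-color inputs are also present), the fiber is a space of equivariant lifts of $\FS_m(k-i,i)_1$ against the matching map $\calP(k-i,i)_1\twoheadrightarrow \MM\calP(k-i,i)_1$; contractibility holds because condition (b) of Definition~\ref{d:appropr} (the product decomposition $\calP(i,j)_1\simeq\calP(i,0)_1\times\calP(0,j)_1$), combined with the elementary fact that a product of an $i$-cube and a $j$-cube is homotopy cartesian, forces this matching map to be a \emph{trivial} fibration. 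This is not a ``fibrancy-type condition on $\calP$'' alone; it is a specific consequence of appropriateness that you would need to isolate and prove.

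The final stage (all $k$ inputs of the second color) is where the real geometric content lives, and your ``shrink the interior discs toward the boundary'' intuition, while correct in spirit, must be converted into a precise statement about the stratified boundary of the model. The paper's Lemma~\ref{l:pair} does exactly this: the pair $\bigl(\FS_m(0,k)_1,\ \FS_m(0,1)_1\circ_1\FS_m(0,k)_2\bigr)$ is $\Sigma_k$-equivariantly homeomorphic to $\bigl(\F_m(k)\times[0,+\infty],\ \F_m(k)\times\{+\infty\}\bigr)$. At this stage one must choose the new data on $\FS_m(0,k)_2$ (a map $f$ to $\calP(0,k)_2$) and on $\FS_m(0,k)_1$ \emph{simultaneously}, with the value of the latter on the $\{+\infty\}$ end of the cylinder forced to be $\gamma_x\circ f$; conditions (a) and (c) then guarantee that $\gamma_x\colon\calP(0,k)_2\to\calP(0,k)_1$ is an equivalence, so the space of such pairs is the homotopy fiber of an equivalence between two equivariant section spaces over $\F_m(k)$, hence contractible. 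Without the cylinder identification and without pinpointing where each of (a), (b), (c) is used, the induction step you describe cannot be closed, so as written the proposal is an outline of the theorem rather than a proof of it.
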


For a 2-colored symmetric operad $\calP$, we denote by $\calP(i,j)_r$ its spaces of operations
with $i$ inputs of the first color, $j$ inputs of the second color and the output of color $r$; $i,j\geq 0$, $r\in\{1,2\}$.
We denote by $\calP_1:=\{\calP_1(i):=\calP(i,0)_1,\, i\geq 0\}$ and $\calP_2:=\{\calP_2(j):=\calP(0,j)_2,\, j\geq 0\}$
its one-colored suboperads. 

A 2-colored operad $\calP$ is said {\it reduced} if both of its arity zero components are singletons: $\calP(0,0)_1=*=
\calP(0,0)_2$.

\begin{definition}\label{d:appropr}
A reduced 2-colored topological operad $\calP$ is  {\it appropriate} if
\renewcommand{\theenumi}{\alph{enumi}}
\begin{enumerate}
\item $\calP(0,1)_1\simeq *$;
\item the maps $\calP(i,j)_1\xrightarrow{\,\simeq\,}\calP(i,0)_1\times\calP(0,j)_1$ induced by compositions with the arity zero singleton components are equivalences for $i,j\geq 1$;
\item the composition maps $\circ_1\colon \calP(0,1)_1\times \calP(0,j)_2\xrightarrow{\,\simeq\,}\calP(0,j)_1$,
$(x,y)\mapsto x\circ_1 y$, are equivalences for $j\geq 2$.
\end{enumerate}

\end{definition}

Note that (a) and (c) imply that for any $x\in \calP(0,1)_1$, the composition with $x$ map
\begin{equation}\label{eq:gamma_x}
\calP(0,j)_2\xrightarrow{\,\simeq\,}\calP(0,j)_1,\,\,\, y\mapsto x\circ_1 y
\end{equation}
is a weak equivalence.

\begin{remark}\label{r:m1}
In case $m=1$, the statement of Theorem~\ref{thm} holds  for any 2-colored operad $\mathcal P$ satisfying a slightly weaker condition. Namely,  condition~(b)
in Definition~\ref{d:appropr} needs to hold only for $i=1$ and any $j\geq 1$. Note that equivalences \eqref{eq:th1} and \eqref{eq:th2} for $m=1$ imply $\TT_k\Op^h(\SC_1,{\mathcal P})\simeq *\simeq \Op^h(\SC_1,{\mathcal P})$.
\end{remark}

\begin{remark}\label{r:sc}
It is easy to see that both the Swiss cheese operad $\SC_n$ and its rationalization $\SC_n^\Q$ are reduced 
appropriate operads. This implies that the left maps in \eqref{eq:zig1} and \eqref{eq:zig2} and the
maps 
\begin{gather*}
\TT_k\Op^h(\SC_m,\SC_n^\Q)\xrightarrow{\,\simeq\,} \TT_k\Op^h(\E_{m-1},\E_{n-1}^\Q), \,\,\,\, k\geq 1;\\
\Op^h(\SC_m,\SC_n^\Q)\xrightarrow{\,\simeq\,} \Op^h(\E_{m-1},\E_{n-1}^\Q). 
\end{gather*}
are weak equivalences.
\end{remark}

\begin{remark}\label{r:other}
We are not aware of any other {\it appropriate} operads besides the Swiss cheese ones (and their localizations). However, there are many geometric operads satisfying condition~(b) of Definition~\ref{d:appropr} that appear if one studies embeddings respecting stratification. Even though Theorem~\ref{thm} is not applicable to such operads, some techniques of its proof are. For example, one can consider  spaces of embeddings of manifolds $f\colon M\hookrightarrow N$, such that $f^{-1}(L)=\partial M$, where $L$ is a fixed proper submanifold in~$M$. The corresponding operads were examined in~\cite{Idrissi}.
\end{remark}

\subsection{Acknowledgement} The author is grateful to one of the anonymous referees for pointing out a mistake in an earlier version of this article.

\section{Preliminaries}\label{s:prelim}
\subsection{The Fulton-MacPherson version of the Swiss cheese operad}\label{ss:FM}
 The Fulton-MacPherson operad $\calF_m$ will be used  as a model of the little discs operad $\E_m$. Its 
$j$-th component $\calF_m(j)$ is defined as a compactification of the configuration 
space of~$j$  points in $\R^m$ modulo translations and positive rescalings: $C(j,\R^m)/\R_+\ltimes\R^m$.
One has $\calF_m(0)=\calF_m(1)=*$, while each $\calF_m(j)$, $j\geq 2$, is a smooth (and semi-algebraic) compact
manifold with corners of dimension $mj-m-1$. The strata of $\calF_m(j)$ are in one-to-one correspondence 
with rooted trees whose leaves are bijectively labelled by $1,2,\ldots,j$ and all of whose  internal vertices are 
of arity $\geq 2$. For more details, see~\cite{GJ}.

For a model of the Swiss cheese operad $\SC_m$, we will use its similarly defined Fulton-MacPherson version,
that we denote by $\FS_m$, see~\cite{Voronov, Will}, where this operad was introduced and studied. Its second output components are defined as follows:
\[
\FS_m(i,j)_2=
\begin{cases}
\F_m(j),&\text{if } i=0;\\
\emptyset& \text{if } i\geq 1.
\end{cases}
\]
(In particular, $(\FS_m)_2=\F_m$.) Its each first output component $\FS_m(i,j)_1$ is defined as a compactification of 
the configuration space of~$i$ labelled points in $\R^{m-1}\times\{0\}$ and $j$ points in $\R^{m-1}\times (0,+\infty)$ quotiented out by translations and rescalings:
\[
\left.\Bigl( C(i,\R^{m-1}\times\{0\})\times C(j, \R^{m-1}\times (0,+\infty))\Bigr)\right/\R_+\ltimes \R^{m-1}.
\]
One has $\FS_m(0,0)_1=\FS_m(1,0)_1=\FS_m(0,1)_1=*$ and $(\FS_m)_1=\F_{m-1}$, while for $i+2j\geq 2$, each component $\FS_m(i,j)_1$ is a smooth (and semi-algebraic) compact manifold with corners
of dimension $(m-1)i+mj-m$. For example, $\FS_m(1,1)_1=D^{m-1}$. We agree to label the strata of $(\FS_m)_1=\F_{m-1}$ by trees with solid edges, while those of $(\FS_m)_2=\F_m$ by trees with dashed edges.
The strata of $\FS_m(i,j)_1$, $j\geq 1$, are labelled by trees with two types of edges: solid and dashed, and satisfying the following properties:
\begin{itemize}
\item  output is solid;
\item there are $i$ solid inputs and $j$ dashed inputs
labelled by $1,\ldots,i$ and $1,\ldots,j$, respectively;
\item  arity of each internal vertex is $\geq 1$;
\item  if the arity of an internal  vertex is one, then its incoming edge is dashed and its outgoing edge is solid; 
\item  if a vertex has a solid incoming edge, then its outgoing edge must also be solid. 
\end{itemize}
 As example, $\FS_m(0,2)_1$ is homeomorphic to $S^{m-1}\times [0,+\infty]$ and consists of five strata. The interior is labelled by 
$\tiny\begin{tikzpicture}[baseline=-.6ex, scale=0.6]
\node[int] (w) at (0,0) {};
\node (v) at (0,-0.8) {};
\draw (w) edge (v);
\node (x) at (-0.5,0.7) {};
\node (y) at (0.5,0.7) {};
\draw[dotted,thick] (w) edge (y) edge (x);
\end{tikzpicture}$.  The boundary part $S^{m-1}\times \{+\infty\}$ is labelled by 
$\tiny\begin{tikzpicture}[baseline=-.6ex,scale=0.6]
\node[int] (w) at (0,0.2) {};
\node[int] (v) at (0,-0.25) {};
\node (a) at (0,-0.8) {};
\draw (a) edge (v);
\node (x) at (-0.5,0.7) {};
\node (y) at (0.5,0.7) {};
\draw[dotted,thick] (w) edge (y) edge (x) edge (v);
\end{tikzpicture}$. 
The part $S^{m-1}\times \{0\}$ is a union of two $(m-1)$-discs 
$\tiny\begin{tikzpicture}[baseline=-.6ex, scale=0.6]
\node[int] (w) at (0,-0.25) {};
\node[int] (a) at (-0.19,0.16) {};
\node (v) at (0,-0.8) {};
\draw (w) edge (v);
\node (x) at (-0.5,0.7) {};
\node (y) at (0.5,0.7) {};
\draw[dotted,thick] (w) edge (y) ;
\draw (w) edge (a);
\draw[dotted,thick] (a) edge (x) ;
\end{tikzpicture}$
and 
$\tiny\begin{tikzpicture}[baseline=-.6ex, scale=0.6]
\node[int] (w) at (0,-0.25) {};
\node[int] (a) at (0.19,0.16) {};
\node (v) at (0,-0.8) {};
\draw (w) edge (v);
\node (x) at (0.5,0.7) {};
\node (y) at (-0.5,0.7) {};
\draw[dotted,thick] (w) edge (y) ;
\draw (w) edge (a);
\draw[dotted,thick] (a) edge (x) ;
\end{tikzpicture}$
along the $(m-2)$-sphere 
$\tiny\begin{tikzpicture}[baseline=-.6ex, scale=0.6]
\node[int] (w) at (0,-0.25) {};
\node[int] (a) at (-0.19,0.16) {};
\node[int] (b) at (0.19,0.16) {};
\node (v) at (0,-0.8) {};
\draw (w) edge (v) edge (a) edge (b);
\node (x) at (-0.5,0.7) {};
\node (y) at (0.5,0.7) {};
\draw[dotted,thick] (b) edge (y) ;
\draw[dotted,thick] (a) edge (x) ;
\end{tikzpicture}
$. 

In our arguments it will be convenient to consider the arity one vertices together with the incoming and outgoing edges as a singe edge with the bottom half solid and the 
upper half dashed.  
With this convention, all the trees labeling strata of $\FS_m(0,2)_1$ are of the same shape.

\subsection{Homotopy setting}\label{ss:htpy}
We work in the Reedy model category $\Op$ of reduced topological operads and $\TT_k\Op$ of $k$-truncated reduced operads. We usually deal with two-colored operads. By a
 $k$-truncated operad $\calP$ we mean a collection of components $\calP(i,j)_r$, $i+j\leq k$, $r\in\{1,2\}$, with the composition operations of usual operads whenever such maps are defined.
For the usual one-colored ones, these model category structures were constructed and studied in~\cite[Section~II.8.4]{Fresse}. In particular, it was proved in~\cite{FTW0} that the derived mapping space of (truncated) operads
is the same for the Reedy model structure and the usual projective one~\cite{BM}. We denote by $\Op(-,-)$,$\TT_k\Op(-,-)$, $\Op^h(-,-)$, $\TT_k\Op^h(-,-)$ the corresponding mapping spaces and derived mapping spaces, respectively.

One has a natural truncation functor $\TT_k\colon\Op\to\TT_k\Op$ that forgets the components of combined arity $>k$. To simplify notation instead of $\TT_k\Op(\TT_k\calP,\TT_k\calQ)$ we will be writing $\TT_k\Op(\calP,\calQ)$. 
We will follow the same convention for the derived mapping spaces.

A possibly truncated operad $\calF$ is Reedy cofibrant if and only if its suboperad $\calF^{>0}$ of positive arity components is projectively cofibrant, see~\cite[Theorem~II.8.4.12]{Fresse}. For example, $\calF_m$ and $\FS_m$ are Reedy 
cofibrant.

For any component $\calP(i,j)_r$ of a 2-colored operad, one can define an $(i+j)$-cubical diagram $\calP(i-\bullet,j-\bullet)_r$ obtained by composing $\calP(i,j)_r$ with the arity zero operations. 
Its matching object $\MM\calP(i,j)_r$ is defined as the limit of the subcubical diagram obtained from $\calP(i-\bullet,j-\bullet)_r$ by removing its initial object $\calP(i,j)_r$. 
An operad $\calP$ is Reedy fibrant if for all $i,j\geq 0$, $r\in\{1,2\}$, the map $\calP(i,j)_r\to\MM \calP(i,j)_r$ is a Serre fibration. For any Reedy fibrant operad, $\MM\calP(i,j)_r$ is equivalent to the homotopy limit of the corresponding  
subcubical part of $\calP(i-\bullet,j-\bullet)_r$. We refer to~\cite[Section~II.8.3]{Fresse}, where these facts are stated and proved in the case of one-colored operads. The two-colored generalization is straightforward.

For our proof we also need to consider the Reedy model category $\TT_{k-1,\ell}\Op$ of $(k-1,\ell)$-truncated operads in which components have either combined arity $\leq k-1$ or have arity $k$ with $\leq \ell$ inputs of the second color.
By construction, the truncation functors $\TT_k\colon\Op\to\TT_k\Op$ and $\TT_{k-1,\ell}\colon\Op\to\TT_{k-1,\ell}\Op$ preserve fibrant objects. The former one preserves cofibrant objects, while the latter preserves cofibrant objects
$\calF$ in which $\calF(1,0)_2=\emptyset$ (and hence, by composing with the arity zero operations, all  $\calF(i,j)_2=\emptyset$, $i\geq 1$).

\section{Proof of Theorem~\ref{thm}}\label{s:proof}
In the proof below we assume that $m\geq 2$. The case $m=1$ is proved by an easy modification of this argument. See also Remark~\ref{r:m1}. Note also that~\eqref{eq:th2} follows from~\eqref{eq:th1} by taking $k\to\infty$.

The operad $\FS_m$ is Reedy cofibrant. Without loss of generality assume that $\calP$ is Reedy fibrant. 
We prove \eqref{eq:th1} inducting over $k$. For $k=1$,
\[
\TT_1\Op(\FS_m,\calP)=\Map\bigl(\FS_m(0,1)_1,\calP(0,1)_1\bigr)=\calP(0,1)_1\simeq *.
\]
The last equivalence is by Definition~\ref{d:appropr}(a). On the other hand, $\TT_1\Op(\calF_{m-1},\calP_1)=*$.

Now, assume that the statement holds for $k-1$. Since the left arrow in the following pullback square is an equivalence, so is the right one.
\[
\xymatrix{
\TT_{k-1}\Op(\FS_{m},\calP)\ar[d]_{\simeq}&\TT_{k-1,0}\Op(\FS_m,\calP)\ar[d]^{\simeq}\ar@{->>}_{\rho_0}[l]\\
\TT_{k-1}\Op(\calF_{m-1},\calP_1)& \TT_k\Op(\calF_{m-1},\calP_1).\ar@{->>}[l]
}
\]
Consider the following sequence of fibrations.
\begin{multline}
\TT_{k-1,0}\Op(\FS_m,\calP)\xleftarrow[\rho_1]{\,\simeq\,}
\TT_{k-1,1}\Op(\FS_m,\calP)\xleftarrow[\rho_2]{\,\simeq\,}
\ldots   \\
\xleftarrow[\rho_k]{\,\simeq\,} 
\TT_{k-1,k}\Op(\FS_m,\calP) 
= \TT_k\Op(\FS_m,\calP).
\end{multline}
Each map $\rho_i$, $0\leq i\leq k$, corresponds to an attachment of new free strata in $\FS_m$. We claim that the maps $\rho_i$, $1\leq i\leq k$, 
 are all weak equivalences, which would finish the proof. The fiber of the map $\rho_i$, $1\leq i\leq k-1$, over any
 $b_{i-1}\in\TT_{k-1,i-1}\Op(\FS_m,\calP)$
is the space of
$(\Sigma_{k-i}\times\Sigma_{i})$-equivariant maps $F_i\colon\FS_m(k-i,i)_1\to\calP(k-i,i)_1$ such that the following diagram commutes:
\[
\xymatrix{
\partial\FS_m(k-i,i)_1\ar[d]\ar[r]&\calP(k-i,i)_1\ar@{->>}^\simeq[d]\\
\FS_m(k-i,i)_1\ar[r] \ar@{-->}^{F_i}[ru]  &  \MM\calP(k-i,i)_1,
}
\]
where the horizontal maps are determined by $b_{i-1}$. Since $\calP$ is Reedy fibrant, the right map in the square is a fibration. Moreover, it is an equivalence, which follows from Lemma~\ref{l:cubes} and Definition~\ref{d:appropr}(b). We conclude that any fiber of $\rho_i$ can be identified with a space of sections relative to the boundary of a trivial fibration over $\left.\FS_m(k-i,i)_1\right/(\Sigma_{k-i}\times\Sigma_{i})$.
Thus, the fiber is weakly contractible.


\begin{lemma}\label{l:cubes}
Let $X(\bullet)$ and $Y(\bullet)$ be an $i$-cubical and a $j$-cubical diagrams, respectively, where $i,j\geq 1$. Then the $(i+j)$-cube $X(\bullet)\times Y(\bullet)$ is homotopy 
cartesian.
\end{lemma}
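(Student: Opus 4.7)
The plan is to prove Lemma~\ref{l:cubes} by computing the matching object $\MM(X\times Y)$ of the product cube directly and identifying it with the initial vertex $X(0)\times Y(0)$.

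First, I would decompose the indexing poset of the punctured $(i+j)$-cube $\{0,1\}^{i+j}\setminus\{0\}$ as the union of two upward-closed subposets $A=\{(s,t):s\neq 0\}$ and $B=\{(s,t):t\neq 0\}$, with intersection $A\cap B=\{(s,t):s\neq 0,\,t\neq 0\}$. One checks that, as subposets of the ambient product order, the inherited order on $A\cup B$ coincides with the pushout order, so the punctured cube is realised as a pushout $A\sqcup_{A\cap B}B$ in posets. Since (homotopy) limits convert pushouts of poset indexing categories into (homotopy) pullbacks under suitable fibrancy, we obtain
\[
\MM(X\times Y)\simeq \mathrm{holim}_A(X\times Y)\,\times^h_{\mathrm{holim}_{A\cap B}(X\times Y)}\,\mathrm{holim}_B(X\times Y).
\]

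Next, I would evaluate each of these three homotopy limits using Fubini and the elementary observation that the homotopy limit of a diagram over a \emph{full} cube (one possessing an initial vertex) is that initial vertex. Since $A$ is the product of posets $X^{\mathrm{punct}}\times\{0,1\}^j$ and $\{0,1\}^j$ is a full cube, $\mathrm{holim}_A(X\times Y)\simeq \MM X\times Y(0)$; symmetrically $\mathrm{holim}_B(X\times Y)\simeq X(0)\times\MM Y$ and $\mathrm{holim}_{A\cap B}(X\times Y)\simeq\MM X\times\MM Y$. Therefore $\MM(X\times Y)$ is the homotopy pullback of the square which is the \emph{product} of the two 1-cubes $X(0)\to\MM X$ and $Y(0)\to\MM Y$. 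The problem thus reduces to the base case $i=j=1$ of the lemma, which is elementary: for any two maps $f,g$, the product square $(f\times\mathrm{id},\mathrm{id}\times g)$ is a homotopy pullback, since after replacing $f$ by a fibration the square becomes a strict pullback along a product fibration, whose pullback is the upper-left corner. Applying this to our square yields $\MM(X\times Y)\simeq X(0)\times Y(0)$, so $X\times Y$ is homotopy cartesian.

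The main obstacle is rigorously justifying the sheaf-type decomposition of the homotopy limit over the poset cover in the first step; this should follow from the Reedy model structure on cubical diagrams after a Reedy fibrant replacement of $X$ and $Y$ (which does not affect the homotopy type of the matching object). An alternative proof that sidesteps this technicality proceeds by induction on $j$ via the iterated total homotopy fiber: the key observation is that for any cube $Z$ and pointed space $W$ the projection $Z\times W\to Z$ induces a natural weak equivalence $\mathrm{tfib}(Z\times W)\simeq\mathrm{tfib}(Z)$, so that viewing $X\times Y$ as a 1-cube of cubes by decomposing $Y$ along its last coordinate reduces the inductive step to taking the homotopy fiber of the identity on $\mathrm{tfib}(X)$, which is contractible.
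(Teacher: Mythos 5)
Your argument is correct. Note, however, that the paper offers no proof to compare against: it explicitly leaves Lemma~\ref{l:cubes} to the reader, so yours stands on its own. The main line of your proof is sound: the two subposets $A=\{(s,t):s\neq 0\}$ and $B=\{(s,t):t\neq 0\}$ are upward closed (cosieves) covering the punctured cube, and since there are no morphisms in either direction between $A\setminus B=\{(s,0):s\neq 0\}$ and $B\setminus A=\{(0,t):t\neq 0\}$, every chain lies entirely in $A$ or entirely in $B$; hence the nerve of the punctured cube is the pushout of $N(A)$ and $N(B)$ along $N(A\cap B)$ and the homotopy limit decomposes as the homotopy pullback you claim. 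The Fubini step and the identification of the resulting square as the product of the cospans $\bigl(\MM X \xrightarrow{=}\MM X\leftarrow X(0)\bigr)$ and $\bigl(Y(0)\to\MM Y\xleftarrow{=}\MM Y\bigr)$ correctly give $X(0)\times Y(0)$ as the homotopy pullback. One caution about your alternative sketch via total homotopy fibers: the equivalence between ``$\mathrm{tfib}$ contractible'' and ``homotopy cartesian'' requires quantifying over all choices of basepoint (and a $\pi_0$-surjectivity check) when the spaces are not connected, so the first argument is the one to keep as the actual proof; it is also the one that matches how the lemma is used in the paper, where the strict matching object $\MM\calP(k-i,i)_1$ of a Reedy fibrant operad computes the homotopy limit of the punctured cube.
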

\begin{proof}
One needs to apply \cite[Proposition 1.18]{Goodwillie} twice: first, by considering the fiber cube with respect to some $X(\bullet)$-direction and then, by taking the fiber cube with respect to any $Y(\bullet)$-direction. Equivalently,
one can apply \cite[Lemma~2.2(ii)]{GKK} and the fact that any square of $X(\bullet)\times Y(\bullet)$, whose one side is parallel to $X(\bullet)$ and another one parallel to  $Y(\bullet)$, is a homotopy pullback.
\end{proof}

We finally check that $\rho_k$ is an equivalence. The fiber of $\rho_k$ over any $b_{k-1}\in\TT_{k-1,k-1}\Op(\FS_m,\calP)$ is the space of pairs $(f,F_k)$ of $\Sigma_k$-equivariant maps
\begin{align*}
f\colon\F_m(k)=\FS_m(0,k)_2\to \calP(0,k)_2,\\
F_k\colon \FS_m(0,k)_1\longrightarrow \calP(0,k)_1
\end{align*}
making the following diagrams commute:
\[
\xymatrix{
\partial\FS_m(0,k)_2\ar[d]\ar[r]&\calP(0,k)_2\ar@{->>}[d]\\
\FS_m(0,k)_2\ar[r] \ar@{-->}^{f}[ru]  &  \MM\calP(0,k)_2,
}
\quad
\xymatrix{
\partial\FS_m(0,k)_1\ar[d]\ar[r]&\calP(0,k)_1\ar@{->>}[d]\\
\FS_m(0,k)_1\ar[r] \ar@{-->}^{F_k}[ru]  &  \MM\calP(0,k)_1.
}
\]
The upper horizontal arrow in the first square as well as the lower horizontal arrows in both squares are determined by the choice of $b_{k-1}$. The upper 
horizontal arrow in the second square is determined by~$b_{k-1}$ on all the strata of $\partial\FS_m(0,k)_1$ except one. The remaining stratum $\FS_m(0,1)_1\circ_1\FS_m(0,k)_2$ 
corresponds to  the tree
\[
T_0\,\,\,=\,
\begin{tikzpicture}[baseline=-1ex]
\node at (-0.6,0.8) {$1$};
\node at (-0.2,0.8) {$2$};
\node at (0.2,0.7) {$\ldots$};
\node at (0.6,0.8) {$k$};
\node[int] (w) at (0,0.1) {};
\node[int] (v) at (0,-0.3) {};
\node (a) at (0,-0.8) {};
\draw (a) edge (v);
\node (x) at (-0.6,0.7) {};
\node (y) at (0.6,0.7) {};
\node (z) at (-0.2,0.7) {};
\draw[dotted,thick] (w) edge (y) edge (x) edge (v) edge (z);
\end{tikzpicture}.
\]
Note that it is homeomorphic to $\FS_m(0,k)_2$. On this remaining stratum the upper horizontal map is determined by the image $x\in\calP(0,1)_1$ of $\FS_m(0,1)_1=*$ (which is obviously determined by $b_{k-1}$) and by $f$. 

Let $\partial_0 \FS_m(0,k)_1\subset\partial\FS_m(0,k)_1$ denote the boundary of $\FS_m(0,k)_1$ minus the (open) stratum encoded by the tree $T_0$. We claim that the fiber of~$\rho_k$ 
is equivalent to the space of $\Sigma_k$-equivariant maps $G_k\colon\FS_m(0,k)_1\to \calP(0,k)_1$ making the following diagram commute:
\[
\xymatrix{
\partial_0\FS_m(0,k)_1\ar[d]_\simeq\ar[r]&\calP(0,k)_1\ar@{->>}[d]\\
\FS_m(0,k)_1\ar[r] \ar@{-->}^{G_k}[ru]  &  \MM\calP(0,k)_1.
}
\]
This follows from the fact that  thanks to the weak equivalence~\eqref{eq:gamma_x}, the space of lifts $f\colon\FS_m(0,k)_2\to \calP(0,k)_2$  in the following
diagram is equivalent to the space of lifts $g\colon\FS_m(0,k)_2\to \calP(0,k)_1$:
\[
\xymatrix{
\partial\FS_m(0,k)_2\ar[d]\ar[r]&\calP(0,k)_2\ar@{->>}[d]\ar[rr]_\simeq^{x\circ_1-}&&\calP(0,k)_1\ar@{->>}[d]\\
\FS_m(0,k)_2\ar[r] \ar@{-->}^{f}[ru] \ar@{-->}_(0.6){g}[rrru]  &  \MM\calP(0,k)_2\ar[rr]_\simeq^{x\circ_1-}&& \MM\calP(0,k)_1.
}
\]
The proof is concluded by  the following lemma.

\begin{lemma}\label{l:triv_cof}
The inclusion $i_0\colon\partial_0\FS_m(0,k)_1\to\FS_m(0,k)_1$ is a trivial $\Sigma_k$-cofibration of $\Sigma_k$-cofibrant objects (in the projective model category structure on $\Sigma_k$-spaces).
\end{lemma}

\begin{example}\label{ex:m_1}
$\FS_1(0,k)_1=\Sigma_k\times A_\infty(k+1)$, while $\partial_0\FS_1(0,k)_1=\Sigma_k\times \partial_0 A_\infty(k+1)$, where $ \partial_0 A_\infty(k+1)\subset \partial A_\infty(k+1)$
is the boundary of the associahedron with one open face removed. Hence, $\partial_0 \FS_1(0,k)_1\simeq_{\Sigma_k}\FS_1(0,k)_1\simeq_{\Sigma_k}\Sigma_k$.
\end{example}

\begin{proof}[Proof of Lemma \ref{l:triv_cof}] 
The fact that $i_0$ is a $\Sigma_k$-cofibration with a cofibrant domain holds by \cite[Corollary~5.2]{BM} (which in particular  implies that any cofibrant operad is $\Sigma$-cofibrant).
The fact that~$i_0$ is a weak equivalence follows from Proposition~\ref{p:contr}. 

Denote by $\pi\colon\FS_m(0,k)_1\to \F_m(k)$ the projection induced by the obvious inclusion $C\bigl(k,\R^{m-1}\times (0,+\infty)\bigr)\to C(k,\R^m)$.

\begin{proposition}\label{p:contr}
For every $m\geq 1$, $k\geq 2$, and $z\in \F_m(k)$, both $\pi^{-1}(z)\subset\FS_m(0,k)_1$ and $(\pi\circ i_0)^{-1}\subset \partial_0 \FS_m(0,k)_1$ are contractible.
\end{proposition}

Together with  \cite[Main Theorem]{Smale_vietoris}, this proposition implies that both $\pi$ and $\pi\circ i_0$ are weak equivalences. By the two-out-of-three property, so is~$i_0$.
\end{proof}

\section{Proof of Proposition~\ref{p:contr}}\label{s:prop_proof}
We will give an explicit description of $\pi^{-1}(z)$ and $(\pi\circ i_0)^{-1}(z)$  for any $z\in\F_m(k)$. First, we claim that for any $z$ 
 in the interior of $\F_m(k)$,
\begin{equation}\label{eq:inter}
(\pi\circ i_0)^{-1}(z)\cong\{0\}\subset [0,\infty]\cong\pi^{-1}(z).
\end{equation}
Indeed, consider the map
$
\lambda\colon\FS_m(0,k)_1\to[0,+\infty],
$
defined in the interior by the formula
\[
\lambda\colon [(x_1,\ldots,x_k)] \mapsto  \frac{\underset{1\leq \ell\leq j}{\min} (x_\ell)_m}{\underset{1\leq \ell_1< \ell_2\leq j}\max\bigl( |x_{\ell_2}-x_{\ell_1}|\bigr)}
\]
and extended to the boundary $\partial\FS_m(0,k)_1$ by continuity. It is easy to see that $\lambda|_{\pi^{-1}(z)}$ gives the homeomorphism~\eqref{eq:inter}.
The part $\lambda^{-1}\bigl((0,1)\bigr)\cap\pi^{-1}(z)$ lies in the interior of $\FS_m(0,k)_1$, the point $\lambda^{-1}(+\infty)\cap\pi^{-1}(z)$ lies in the stratum
encoded by~$T_0$, and $\lambda^{-1}(0)\cap\pi^{-1}(z)\in\partial_0\FS_m(0,k)_1$. The latter point lies in the stratum corresponding to the tree

\begin{equation}\label{eq:lambda0}
\begin{tikzpicture}[baseline=-.7ex]
\node at (-1.8,1.5) {$j_1$};
\node at (-1.2,1.5) {$j_2$};
\node at (0,1.5) {$j_r$};
\node at (-0.5,0.9) {$\ldots$};
\node at (0.9,0.9) {$\ldots$};
\node at (0.7,1.5) {$j'_1$};
\node at (1.9,1.5) {$j'_{k-r}$};
\node  (z1) at (-1.8,1.3) {};
\node  (z2) at (-1.2,1.3) {};
\node  (z3) at (0,1.3) {};
\node  (z4) at (0.7,1.3) {};
\node  (z5) at (1.9,1.3) {};
\node[int] (y3) at (0,0.4) {};
\node[int] (y2) at (-0.7,0.4) {};
\node[int] (y1) at (-1.2,0.4) {};
\node[int] (v) at (0,-0.2) {};
\node (root) at (0,-0.9) {};
\draw (v) edge (root) edge (y3) edge (y2) edge (y1);
\draw[dotted,thick] (y1) edge (z1);
\draw[dotted,thick] (y2) edge (z2);
\draw[dotted,thick] (y3) edge (z3);
\draw[dotted,thick] (v) edge (z4) edge (z5);
\end{tikzpicture},
\end{equation}
where
\[
\{j_1,\ldots,j_r\}=\left\{j\in\{1,\ldots,k\}\,|\, (x_j)_m=\min_{1\leq \ell\leq k}(x_\ell)_m\,\right\}=: J_z,
\]
and $[(x_1,\ldots,x_k)]$ is any point in  $\lambda^{-1}\bigl((0,1)\bigr)\cap\pi^{-1}(z)$.  We call $J_z$ the set of {\it bottom elements} of~$z$ since these are the points
in~$z$ with the smallest last coordinate.

On the other hand, the images of the interiors of $\FS_m(0,k)_1$ and of the $T_0$-stratum under~$\pi$ are both  the interior of $\F_m(k)$.
This implies that for any $z\in\partial\F_m(k)$, 
$
(\pi\circ i_0)^{-1}(z)=\pi^{-1}(z).
$
Thus, from now on, we focus on $\pi^{-1}(z)$. 

Note that for $z$ in the interior of $\F_m(i+j)$, $i\geq 1$, its preimage under the map $\FS_m(i,j)_1\to\F_m(i+j)$ induced by the obvious inclusion
\[
C(i,\R^{m-1}\times 0)\times C\left(j,\R^{m-1}\times(0,+\infty)\right)\subset C(i+j,\R^m),
\]
is either a point (if $J_z=\{1,\ldots,i\}$) or the empty set (otherwise).

Consider now $z=(z_v)_{v\in V(T_z)}\in\F_m(k)$ in a stratum corresponding to any tree $T_z$. Here, $V(T)$ denotes the set of internal vertices of a tree~$T$, and
each $z_v$ lies in the interior of $\F_m(|v|)$. It is clear that $\pi^{-1}(z)$ can intersect only the strata labeled by trees $T_z'$ of the same shape as $T_z$,
but with some edges, including the root one,  replaced by solid ones or by half-dashed on top and half-solid on bottom ones. 
By previous considerations, if $T_z'$ is such, the intersection of $\pi^{-1}(z)$ with the closure of the $T_z'$-stratum is either empty or a cube of dimension equal to the number of vertices of $T_z'$ of arity $\geq 2$ and such that the outgoing edge is solid and   all the incoming edges are dashed. 
 As a consequence 
$\pi^{-1}(z)$ 
naturally forms a finite cubical complex.

Define the {\it essential subtree} $S_z$ of $T_z$ by including in it only those edges of $T_z$ that appear as solid in trees $T_z'$  labeling strata with points in~$\pi^{-1}(z)$. We draw $S_z$ with solid edges. It always contains the root edge. Its vertices (we call them {\it essential}) can be described recursively. Except the root vertex, they are all internal. A vertex~$a\in V(T_z)$  is essential if the vertex~$b$  below it is essential and the edge $ab\in J_{z_b}$.
For example, if $z$ lies in the interior of $\F_m(k)$, $S_z$ consists only of two vertices connected by one edge (the root edge). As another example, when $m=1$, $S_z$ is always a linear graph.

Clearly, for every tree $T'_z$  in the preimage of $T_z$, the solid part is connected. Moreover, by construction it is contained in $S_z$ except possibly for leaf edges that 
may happen to be half-solid as, for example, in Figure~\eqref{eq:lambda0}. Based on this observation, it is not hard to see that $\pi^{-1}(z)$ is
homeomorphic to the space $C(S_z)$ of closed connected subsets of  $S_z$  containing the root vertex.

 In the following example, $C(S_z)$ is a union of a segment,
a square and a cube.
$$
S_z=
\begin{tikzpicture}[baseline=4.0ex]
\node[int] (c1) at (-1,1.7) {};
\node[int] (c2) at (0,1.7) {};
\node[int] (b1) at (-.5,1.2) {};
\node[int] (b2) at (.5,1.2) {};
\node[int] (a) at (0,0.7) {};
\node[int] (r) at (0,0) {};
\draw (a) edge (r) edge (b1) edge (b2);
\draw (b1) edge (c1) edge (c2);
\node at (0,-0.3) {root};
\end{tikzpicture}
\hspace{.6in}
\begin{tikzpicture}[baseline=8.0ex, scale=1.4]
\node[int] (b11) at (-1.2,1.5) {};
\node[int] (b12) at (-1.1,2) {};
\node[int] (b13) at (-1.8,2.3) {};
\node[int,draw=gray,fill=gray] (c21) at (-0.7,2) {};
\node[int] (c22) at (-0.6,2.5) {};
\node[int] (c23) at (-1.3,2.8) {};
\node[int] (c2) at (0,1.7) {};
\node[int] (b1) at (-.5,1.2) {};
\node[int] (b2) at (.5,1.2) {};
\node[int] (a) at (0,0.7) {};
\node[int] (r) at (0,0) {};
\draw (a) edge (r) edge (b1) edge (b2);
\draw (b1)  edge (c2) edge (b11) edge (b12);
\draw (c2) edge (b2);
\draw (b13) edge (b11) edge (b12) edge (c23);
\draw (c22) edge (c2) edge (c23) edge (b12);
\draw[dotted,thick] (c21) edge (b11) edge (c23) edge (c2);
\node at (-1.5,3.2) {
        \begin{tikzpicture}[scale=0.3]
         \node[int] (c1) at (-1,1.7) {};
\node[int] (c2) at (0,1.7) {};
\node[int] (b1) at (-.5,1.2) {};
\node[int] (b2) at (.5,1.2) {};
\node[int] (a) at (0,0.7) {};
\node[int] (r) at (0,0) {};
\draw (a) edge (r) edge (b1) edge (b2);
\draw (b1) edge (c1) edge (c2);
        \end{tikzpicture}
    };
    
    \node at (-2.1,2.7) {
        \begin{tikzpicture}[scale=0.3]
         \node[int] (c1) at (-1,1.7) {};
\node[int] (c2) at (0,1.7) {};
\node[int] (b1) at (-.5,1.2) {};
\node[int] (a) at (0,0.7) {};
\node[int] (r) at (0,0) {};
\draw (a) edge (r) edge (b1); 
\draw (b1) edge (c1) edge (c2);
        \end{tikzpicture}
    };
    
    \node at (-0.45,2.85) {
        \begin{tikzpicture}[scale=0.3]
         \node[int] (c1) at (-1,1.7) {};
\node[int] (b1) at (-.5,1.2) {};
\node[int] (b2) at (.5,1.2) {};
\node[int] (a) at (0,0.7) {};
\node[int] (r) at (0,0) {};
\draw (a) edge (r) edge (b1) edge (b2);
\draw (b1) edge (c1) ; 
        \end{tikzpicture}
    };
    
     \node at (-1.35,2.4) {
        \begin{tikzpicture}[scale=0.3]
         \node[int] (c1) at (-1,1.7) {};
\node[int] (b1) at (-.5,1.2) {};
\node[int] (a) at (0,0.7) {};
\node[int] (r) at (0,0) {};
\draw (a) edge (r) edge (b1); 
\draw (b1) edge (c1) ; 
        \end{tikzpicture}
    };
    
      \node at (-1.45,1.6) {
        \begin{tikzpicture}[scale=0.3]
\node[int] (c2) at (0,1.7) {};
\node[int] (b1) at (-.5,1.2) {};
\node[int] (a) at (0,0.7) {};
\node[int] (r) at (0,0) {};
\draw (a) edge (r) edge (b1); 
\draw (b1) edge (c2);
        \end{tikzpicture}
    };
    
       \node at (-.6,1.25) {
        \begin{tikzpicture}[scale=0.3]
\node[int] (b1) at (-.5,1.2) {};
\node[int] (a) at (0,0.7) {};
\node[int] (r) at (0,0) {};
\draw (a) edge (r) edge (b1); 
        \end{tikzpicture}
    };
    
      \node at (0.2,2.1) {
        \begin{tikzpicture}[scale=0.3]
\node[int] (b1) at (-.5,1.2) {};
\node[int] (b2) at (.5,1.2) {};
\node[int] (a) at (0,0.7) {};
\node[int] (r) at (0,0) {};
\draw (a) edge (r) edge (b1) edge (b2);
        \end{tikzpicture}
    };
    
        \node at (.73,1.55) {
        \begin{tikzpicture}[scale=0.3]
\node[int] (b2) at (.5,1.2) {};
\node[int] (a) at (0,0.7) {};
\node[int] (r) at (0,0) {};
\draw (a) edge (r) edge (b2); 
        \end{tikzpicture}
    };
    
           \node at (0.2,.95) {
        \begin{tikzpicture}[scale=0.3]
\node[int] (a) at (0,0.7) {};
\node[int] (r) at (0,0) {};
\draw (a) edge (r);
        \end{tikzpicture}
    };
  
  \node[int] at (0.2,0) {};
  
  \node at (-0.57,1.98) {
        \begin{tikzpicture}[scale=0.3]
\node[int,draw=gray,fill=gray] (c2) at (0,1.7) {};
\node[int,draw=gray,fill=gray] (b1) at (-.5,1.2) {};
\node[int,draw=gray,fill=gray] (b2) at (.5,1.2) {};
\node[int,draw=gray,fill=gray] (a) at (0,0.7) {};
\node[int,draw=gray,fill=gray] (r) at (0,0) {};
\draw[gray] (a) edge (r) edge (b1) edge (b2);
\draw[gray] (b1) edge (c2) ; 
        \end{tikzpicture}
    };

\end{tikzpicture}
\,\,\,= C(S_z)\cong \pi^{-1}(z)
$$
\smallskip

We view $S_z$ as a metric space by assigning length one to each edge.
Let $B_t\in C(S_z)$ denote the set of points of distance $\leq t$ to the root. The following homotopy shows that $C(S_z)$ is contractible:
\[
H\colon C(S_z)\times [0,d]\to C(S_z),\,\,\, (P,t)\mapsto P\cup B_t,
\]
where $d$ is the diameter of $S_z$.

%

\end{document}